\DeclareMathOperator{\ar}{\mathrm{Area}}
\DeclareMathOperator{\len}{\mathrm{Length}}
\begin{document}
 \bibliographystyle{plain}

 \newtheorem*{definition}{Definition}
 \newtheorem{theorem}{Theorem}
 \newtheorem{lemma}{Lemma}
 \newtheorem{corollary}{Corollary}
 \newcommand{\mc}{\mathcal}
 \newcommand{\F}{\mc{F}}
 \newcommand{\T}{\mc{T}}
 \newcommand{\lf}{\left\lfloor}
 \newcommand{\rf}{\right\rfloor}
 \newcommand{\rar}{\rightarrow}
 \newcommand{\mbb}{\mathbb}
 \newcommand{\R}{\mbb{R}}
 \newcommand{\N}{\mbb{N}}
 \newcommand{\Q}{\mbb{Q}}
 \newcommand{\Z}{\mbb{Z}}
 \newcommand{\Zv}{\Z^2_{\text{vis}}}
\title[Numerators of differences of nonconsecutive Farey fractions]{Numerators of differences of\\ nonconsecutive Farey fractions}
\author{Alan K. Haynes}
\subjclass[2000]{11B57} \keywords{Farey fractions}
\thanks{Research supported by EPSRC grant EP/F027028/1}
\address{Department of Mathematics, University of York, Heslington, York YO10 5DD, UK}
\email{akh502@york.ac.uk}
 \allowdisplaybreaks


\begin{abstract}
An elementary but useful fact is that the numerator of the difference of two consecutive Farey fractions is equal to one. For triples of consecutive fractions the numerators of the differences are well understood and have applications to several interesting problems. In this paper we investigate numerators of differences of fractions which are farther apart. We establish algebraic identities between such differences which then allow us to calculate their average values by using properties of a measure preserving transformation of the Farey triangle.
\end{abstract}


\maketitle

\section{Introduction and statement of results}

For $Q\in\mathbb{N}$ the Farey fractions of order $Q$ are defined as \[\F_Q=\left\{\frac{a}{q}\in\Q : 1\leq q\leq Q, 0< a\le q,  (a,q)=1\right\},\] where $(a,q)$ denotes the greatest common divisor of $a$ and $q$. Let us write \[\F_Q=\{\gamma_1,\gamma_2,\ldots ,\gamma_{N(Q)}\}\] so that
$1/Q=\gamma_1<\gamma_2<\cdots <\gamma_{N(Q)}=1$ and extend this sequence by requiring that $\gamma_{i+N(Q)}=\gamma_i+1$ for
all $i\in\Z$. For each $i$ we will write $\gamma_i=p_i/q_i$ with
$(p_i,q_i)=1$.

A fundamental property of the Farey fractions which makes them a natural choice for many problems is that for any $i$, \[p_iq_{i-1}-p_{i-1}q_i=1.\] This follows from the fact that any parallelogram in the plane whose vertices are integer lattice points and whose closure contains no other lattice points has area one (see \cite{hardy1979}). Numerators of differences of nonconsecutive Farey fractions are also important in many problems. Since they are the object of study in this paper we make the following definition.
\begin{definition}
Given a positive integer $k$ and a fraction $\gamma_i$ in $\F_Q$
we define the $k-$index of $\gamma_i$ by
\begin{equation*}
\nu_k(\gamma_i)=p_{i+k-1}q_{i-1}-p_{i-1}q_{i+k-1}.
\end{equation*}
\end{definition}
Note that this definition depends on $Q$. The functions $\nu_k$ are easily
seen to be periodic in the sense that
\begin{align*}
\nu_k(\gamma_{i+N(Q)})&=\nu_k(\gamma_i).
\end{align*}
We have chosen the terminology {\em $k-$index} because the quantity $\nu_2(\gamma_i)$ has already been named the {\em index} of the fraction $\gamma_i$ in $\F_Q$ \cite{hall2003}. This index has been extensively studied and has applications to some interesting problems in number theory. To give the reader some feel for how it behaves, it is easy to show that as $i$ and $Q$ vary, $\nu_2(\gamma_i)$ takes all positive integer values. Furthermore the frequencies with which the different values occur can be estimated asymptotically as a function of $Q$. We will give more details about $\nu_2$ at the end of this section.

Numerators of differences of Farey fractions which are farther apart, which correspond to the values of $\nu_k(\gamma_i)$ for $k>2$, have not yet been investigated. However they arise naturally in problems where the denominators of the fractions are restricted to arithmetic progressions with composite moduli. Such problems have recently been considered in \cite{alkan2006a}, \cite{alkan2006b}, and \cite{cobeli2006}, and they have potential applications to billiards in which the source is centered not at the origin but at a point with nonzero rational coordinates. There is also a connection between such sequences of fractions and the Riemann Hypothesis for Dirichlet L-functions, but we will not say anything more about this now.

The first main result of this paper is the following theorem, which establishes a relatively simple algebraic relationship between the values taken on by $\nu_k$ and those taken on by $\nu_2$. The quantities $K_{k-1}$ which appear in the statement of the theorem are the convergent polynomials, and they are defined below.
\begin{theorem}\label{algidthm}
Let $Q$ be fixed and let $k$ be any positive integer. Then for each
fraction $\gamma_i$ in $\F_Q$ we have that
\begin{equation}\label{contiden}
\nu_k(\gamma_i)=\left(\frac{2k-1}{2}\right)K_{k-1}\left(-\nu_2(\gamma_i),\nu_2(\gamma_{i+1}),\ldots
,(-1)^{k-1}\nu_2(\gamma_{i+k-2})\right),
\end{equation}
where $(\frac{\cdot}{2})$ denotes the Kronecker symbol, defined by
\begin{equation*}
\left(\frac{n}{2}\right)=\begin{cases}0&\text{if }2|n,\\1&\text{if }
n\equiv\pm 1\mod 8,\\-1&\text{if }n\equiv\pm 3\mod
8.\end{cases}
\end{equation*}
\end{theorem}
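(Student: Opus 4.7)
The starting point is the classical observation (implicit in the excerpt) that the denominators of consecutive Farey fractions of order $Q$ satisfy a three-term recurrence whose coefficients are exactly the $2$-indices. Specifically, from the relation $p_iq_{i-1}-p_{i-1}q_i=1$ for consecutive fractions one can solve for $p_{i+1}$ and substitute into the definition of $\nu_2(\gamma_i)=p_{i+1}q_{i-1}-p_{i-1}q_{i+1}$ to obtain
\begin{equation*}
\nu_2(\gamma_i)=\frac{q_{i-1}+q_{i+1}}{q_i},
\end{equation*}
so that, writing $a_j:=\nu_2(\gamma_j)$, both the numerators and denominators satisfy
\begin{equation*}
q_{i+1}=a_i q_i-q_{i-1},\qquad p_{i+1}=a_i p_i-p_{i-1}.
\end{equation*}
This is the first step I would record as a lemma.

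Next I would use this three-term recurrence to show that the quantities $\nu_k(\gamma_i)$ themselves satisfy the ``minus continuant'' recurrence
\begin{equation*}
\nu_{k+1}(\gamma_i)=a_{i+k-1}\,\nu_k(\gamma_i)-\nu_{k-1}(\gamma_i),
\end{equation*}
with initial values $\nu_1(\gamma_i)=1$ and $\nu_2(\gamma_i)=a_i$. The verification is a direct substitution: writing $\nu_{k+1}(\gamma_i)=p_{i+k}q_{i-1}-p_{i-1}q_{i+k}$, replace $p_{i+k}$ and $q_{i+k}$ using the recurrence of the previous paragraph (with index shifted to $i+k-1$), and collect terms. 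Equivalently, one can encode this in the transfer matrix $M_j=\bigl(\begin{smallmatrix}a_j & -1\\ 1 & 0\end{smallmatrix}\bigr)$ and observe that $\nu_k(\gamma_i)$ is the $(1,1)$ entry of $M_{i+k-2}\cdots M_i$; I would present whichever derivation is cleaner.

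The final step is to verify that the right-hand side of \eqref{contiden} satisfies the same recurrence and initial conditions. Setting $\epsilon_k=\left(\frac{2k-1}{2}\right)$ and $R_k=\epsilon_k\,K_{k-1}\bigl(-a_i,a_{i+1},\dots,(-1)^{k-1}a_{i+k-2}\bigr)$, I invoke the defining recursion of the continuant polynomial to get
\begin{equation*}
R_{k+1}=\epsilon_{k+1}\bigl[(-1)^k a_{i+k-1}\,K_{k-1}+K_{k-2}\bigr]=\frac{(-1)^k\epsilon_{k+1}}{\epsilon_k}\,a_{i+k-1}R_k+\frac{\epsilon_{k+1}}{\epsilon_{k-1}}\,R_{k-1}.
\end{equation*}
The identity then reduces to the purely arithmetic check that $\epsilon_{k+1}/\epsilon_{k-1}=-1$ and $(-1)^k\epsilon_{k+1}/\epsilon_k=1$ for all $k\ge 1$. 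Since $2k-1$ runs through the odd residues $1,3,5,7,\dots\pmod 8$, the sequence $(\epsilon_k)_{k\ge 1}$ has period $4$ with pattern $+,-,-,+$, and both required quotients follow at once from a small case check.

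The main obstacle I anticipate is not conceptual but bookkeeping: one has to be careful about the alternation of signs $(-1)^j$ in the arguments of $K_{k-1}$ in order to convert between the ``minus'' recurrence satisfied by the $\nu_k$ and the ``plus'' recurrence satisfied by the standard continuant, and to see that the sign corrections collapse exactly into the Kronecker symbol $\left(\frac{2k-1}{2}\right)$. Once this Kronecker computation is done, an induction on $k$ with base cases $k=1,2$ completes the proof.
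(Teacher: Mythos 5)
Your proposal is correct, and it reaches the key identity by a genuinely more direct route than the paper. The paper first proves the determinant identity $\nu_{k-1}(\gamma_i)\nu_{k-1}(\gamma_{i+1})-\nu_k(\gamma_i)\nu_{k-2}(\gamma_{i+1})=1$ (Lemma \ref{detlemma}) by a somewhat involved expansion, and only then deduces the three-term recurrence $\nu_k(\gamma_i)=\nu_2(\gamma_{i+k-2})\nu_{k-1}(\gamma_i)-\nu_{k-2}(\gamma_i)$ (Lemma \ref{identity2}) by a second induction. You obtain that same recurrence in one line by substituting the classical relations $q_{j+1}=\nu_2(\gamma_j)q_j-q_{j-1}$ and $p_{j+1}=\nu_2(\gamma_j)p_j-p_{j-1}$ (which follow from adding the two unimodularity relations and using $(p_j,q_j)=1$, exactly as in the paper's computation (\ref{nudef3})) into the definition $\nu_{k+1}(\gamma_i)=p_{i+k}q_{i-1}-p_{i-1}q_{i+k}$; the transfer-matrix formulation you mention is the same computation packaged differently. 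From there your final step --- checking that $\epsilon_k=\bigl(\tfrac{2k-1}{2}\bigr)$ has period $4$ with pattern $+,-,-,+$ and that the two quotients $\epsilon_{k+1}/\epsilon_{k-1}=-1$ and $(-1)^k\epsilon_{k+1}/\epsilon_k=1$ convert the minus-recurrence into the continuant recurrence (\ref{contrecur}) --- is essentially identical to the paper's inductive step (\ref{id3eq1})--(\ref{id3eq2}), and your sign checks are right. What your route buys is brevity and transparency; what the paper's route buys is the $SL_2(\Z)$ identity (\ref{id1eq4}) as a standalone byproduct, which is of independent interest but is not needed anywhere else in the argument. Either way the base cases $k=1,2$ are immediate, so your proof is complete.
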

Our second main result is the next theorem, which gives us an asymptotic estimate for the average value of $\nu_k$ as $Q\rar\infty$.
\begin{theorem}\label{asympthm}
For each integer $k\ge 0$ there exists a real constant $B(k)$
for which
\begin{equation}
\frac{1}{N(Q)}\sum_{i=1}^{N(Q)}\nu_k(\gamma_i)=B(k)+O_k\left(\frac{(\log Q)^2}{Q}\right)
\end{equation}
as $Q\rar\infty$. Furthermore for each $k$ the constant $B(k)$ can be computed by using the formula (\ref{Bkval}).
\end{theorem}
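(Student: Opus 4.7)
The plan is to recast the average as a Birkhoff-type sum over an orbit of the Farey triangle map and then apply a quantitative equidistribution estimate. The starting point is Theorem \ref{algidthm}, which presents $\nu_k(\gamma_i)$ as a fixed polynomial $P_k$, obtained by expanding the continuant $K_{k-1}$ and multiplying by the Kronecker factor, in the consecutive values $\nu_2(\gamma_i), \nu_2(\gamma_{i+1}), \ldots, \nu_2(\gamma_{i+k-2})$. I would then invoke the classical identification of consecutive Farey fractions with points in the Farey triangle $\T = \{(x,y) \in (0,1]^2 : x+y>1\}$: setting $(x_i, y_i) := (q_{i-1}/Q, q_i/Q)$ and using the recursion $q_{i+1} = \nu_2(\gamma_i)\, q_i - q_{i-1}$ gives $(x_{i+1}, y_{i+1}) = T(x_i, y_i)$ for the Lebesgue-preserving Farey triangle map $T(x,y) = (y,\, \lf (1+x)/y \rf y - x)$, together with $\nu_2(\gamma_i) = f(x_i, y_i)$ for $f(x,y) := \lf (1+x)/y \rf$. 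Iterating gives $\nu_2(\gamma_{i+j}) = f(T^j(x_i, y_i))$, so writing $G_k := P_k(f,\, f\circ T,\, \ldots,\, f\circ T^{k-2})$ produces $\nu_k(\gamma_i) = G_k(x_i, y_i)$ for every $i$.

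The theorem then reduces to establishing that
\[ \frac{1}{N(Q)}\sum_{i=1}^{N(Q)} G_k(x_i, y_i) = 2\int_\T G_k(x,y)\, dx\, dy + O_k\!\left(\frac{(\log Q)^2}{Q}\right), \]
the integral on the right being the constant $B(k)$, which upon expansion in the level sets of $f,\, f\circ T,\, \ldots,\, f\circ T^{k-2}$ matches the formula (\ref{Bkval}). For the asymptotic itself I would appeal to the standard equidistribution of the denominator pairs $(x_i, y_i)$ in $\T$, provable by lattice-point counts over $\Zv$ and yielding a discrepancy bound of order $(\log Q)/Q$ on indicators of nice subregions. To handle the unboundedness of $G_k$ I would truncate at a level $L$, apply the discrepancy bound to the bounded piece, and control the discrete tail $\sum_{i\,:\,G_k(x_i,y_i) > L} G_k(x_i, y_i)$ together with its continuous counterpart by an area estimate for the super-level set $\{G_k > L\}$. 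Optimizing $L$ as a suitable power of $\log Q$ then produces the stated error term $(\log Q)^2/Q$.

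The main obstacle is the unboundedness of $G_k$, together with the prior need to verify that $G_k \in L^1(\T)$. The function $\nu_2 = f$ itself is integrable, since $\{f \ge m\}$ has area of order $1/m^2$ on $\T$, but $f^2$ is not, and hence a naive monomial bound on $P_k$ would fail. The saving dynamical input is that a large value of $\nu_2(\gamma_i)$ forces the orbit into a region in which $\nu_2(\gamma_{i+1}) = f \circ T$ is bounded by a small constant; an analogous correlation controls longer consecutive strings. Quantifying this cancellation, deriving pointwise estimates on $G_k$ that establish its integrability, and showing that $\{G_k > L\}$ has area decaying like a negative power of $L$ are the technical heart of the argument and determine the shape of the final error term.
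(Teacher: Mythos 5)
Your overall strategy---transferring the sum to the Farey triangle via $(x_i,y_i)=(q_{i-1}/Q,q_i/Q)$, writing $\nu_k(\gamma_i)=G_k(x_i,y_i)$ with $G_k$ a polynomial in $f,\,f\circ T,\ldots,f\circ T^{k-2}$, and evaluating the resulting Birkhoff average by counting points of $\Zv$ in subregions of $Q\T$---is exactly the paper's. You have also correctly located the crux: $f\notin L^2(\T)$, so naive bounds on the monomials of $G_k$ fail, and one must use the fact that consecutive values of $f$ along an orbit cannot all be large at once. But the proposal stops precisely there: you assert that ``an analogous correlation controls longer consecutive strings'' and defer ``quantifying this cancellation'' to ``the technical heart of the argument'' without supplying it. That is the one step of the proof that genuinely requires an input beyond equidistribution, and it is not routine. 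The paper imports it as Lemma \ref{tkintlemma} (from Boca--Gologan--Zaharescu): for $h\ge 1$ and $\min(k,\ell)>c_h=4h+2$ one has $\T_k^*\cap T^{-h}\T_\ell^*=\emptyset$. This is far stronger than a decay-of-correlations statement---the intersection is literally empty---so in each monomial at most one factor $f\circ T^{j}$ can exceed $c_{k-1}$ at any point of $\T$. That single fact gives the pointwise bound $G_k\ll_k 1+\sum_j f\circ T^j$, hence the integrability of $G_k$, the convergence of the series defining $B(k)$ in (\ref{Bkval}), and the decomposition of the sum into pieces in which only one index is unbounded. Without this lemma or an equivalent, your argument does not close.

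A second, quantitative problem: your plan to truncate at a level $L$ that is ``a suitable power of $\log Q$'' cannot produce the stated error term. Since $\#\{i:\nu_2(\gamma_i)\ge M\}=\#\{Q\T_M^*\cap\Zv\}\asymp Q^2/M^2$ for $M$ not too close to $Q$, partial summation gives $\sum_{\nu_2(\gamma_i)\ge M}\nu_2(\gamma_i)\asymp Q^2/M$, so the terms discarded by truncating $G_k$ at level $L$ contribute on the order of $1/L$ to the average. For this to be $O((\log Q)^2/Q)$ you must take $L$ essentially of size $Q$, not polylogarithmic. The paper sidesteps the issue by performing the layer-cake decomposition exactly: the single index allowed to be large runs up to $2Q$ (beyond which $Q\T_{\ell}\cap\Zv=\emptyset$), and the two logarithms in the error arise from $\len\partial\T_{\ell}^*\ll 1/\ell$ summed over $\ell\le 2Q$ together with the $\log R$ in the lattice-point lemma---not from optimizing a cutoff.
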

The convergent polynomials which appear in Theorem \ref{algidthm} are
defined by
\begin{align*}
K_0(\cdot )=1\quad\text{and}\quad K_1(x_1)=x_1,
\end{align*}
and then recursively by
\begin{align}
 K_n(x_1,x_2,\ldots,x_n)=&x_nK_{n-1}(x_1,x_2,\ldots
,x_{n-1})+K_{n-2}(x_1,x_2,\ldots ,x_{n-2}).\label{contrecur}
\end{align}
Thus each $K_n$ is an element of $\Z [x_1,x_2,\ldots
,x_n]$ and is linear in each of the variables $x_1,x_2,\ldots ,x_n$.
For example we have
\begin{align}
K_2(x_1,x_2)=&x_1x_2+1\label{K2form}\\
\intertext{and}
K_3(x_1,x_2,x_3)=&x_1x_2x_3+x_1+x_3.\label{K3form}
\end{align}
Some basic facts about these polynomials can be found in \cite{graham1994}.

Now for later reference let us say a few more words about $\nu_2$. The index of the fraction $\gamma_i$ was defined in \cite{hall2003} as
\begin{equation}\label{nudef1}
\nu(\gamma_i)=\frac{q_{i-1}+q_{i+1}}{q_i}.
\end{equation}
By the basic properties of Farey fractions we have that
\begin{align}
\nu(\gamma_i)=\frac{q_{i-1}+q_{i+1}}{q_i}&=q_{i+1}q_{i-1}\left(\frac{1}{q_{i-1}q_i}+\frac{1}{q_iq_{i+1}}\right)\nonumber\\
&=q_{i-1}q_{i+1}(\gamma_{i+1}-\gamma_i+\gamma_i-\gamma_{i-1})\label{nudef3}\\
&=p_{i+1}q_{i-1}-p_{i-1}q_{i+1},\nonumber
\end{align}
which shows that $\nu(\gamma_i)$ and $\nu_2(\gamma_i)$ are indeed equal. As recorded in \cite[(1.4)]{hall2003} another formula for the index is given by
\begin{equation}\label{indform1}
\nu_2(\gamma_i)=\lf\frac{Q+q_{i-1}}{q_i}\rf.
\end{equation}
Hall and Shiu \cite{hall2003} proved that
\begin{equation}\label{indsum1}
\sum_{i=1}^{N(Q)}\nu(\gamma_i)=3N(Q)-1.
\end{equation}
In the same paper they also proved asymptotic formulas for the sum
of the square moments of $\nu$, as well as for several other related
quantities. Boca, Gologan, and Zaharescu \cite{boca2002} extended
this result by finding asymptotic formulas for all moments of the
index which lie in $(0,2)$. Among other things they also proved that
for $h\ge 1$ there exists a constant $A(h)$ for which
\begin{equation}\label{twistsum}
\sum_{i=1}^{N(Q)}\nu(\gamma_i)\nu(\gamma_{i+h})=A(h)N(Q)+O_h(Q(\log Q)^2).
\end{equation}
It was shown in \cite{boca2002} that the constant $A(h)$ is $O(\log h)$, and it was also conjectured that this could be reduced to $O(1)$. We will say more about this and about bounds for our constants $B(k)$ in Section 4.


\section{Proof of Theorem \ref{algidthm}}

We begin by proving the following identity.
\begin{lemma}\label{detlemma}
Choose $Q\ge 2$ and $\gamma_i\in\F_Q$. Then for $k\ge 3$ we have
\begin{equation*}
\begin{pmatrix}\nu_{k-1}(\gamma_i)&\nu_k(\gamma_i)\\\nu_{k-2}(\gamma_{i+1})&\nu_{k-1}(\gamma_{i+1})\end{pmatrix}\in
SL_2(\Z).
\end{equation*}
\end{lemma}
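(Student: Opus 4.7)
The entries of the matrix are integers directly from the definition of $\nu_k$, so the content of the lemma is that the determinant equals $1$. The plan is therefore to expand each of the four entries in terms of the $p_j$ and $q_j$, compute the $2\times 2$ determinant, and identify the result as a product of two ``consecutive'' Farey determinants.

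Concretely, abbreviate $a=p_{i-1}$, $b=p_i$, $c=p_{i+k-2}$, $d=p_{i+k-1}$ and $a'=q_{i-1}$, $b'=q_i$, $c'=q_{i+k-2}$, $d'=q_{i+k-1}$. Unwinding the definition gives
\begin{align*}
\nu_{k-1}(\gamma_i)&=ca'-ac',\\
\nu_k(\gamma_i)&=da'-ad',\\
\nu_{k-2}(\gamma_{i+1})&=cb'-bc',\\
\nu_{k-1}(\gamma_{i+1})&=db'-bd'.
\end{align*}
I would then expand $(ca'-ac')(db'-bd')-(da'-ad')(cb'-bc')$. The terms $a'b'cd$ and $abc'd'$ appear in both products and cancel, and the remaining four terms can be grouped as $(a'b-ab')(c'd-cd')$. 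By the standard consecutive-Farey identity $p_jq_{j-1}-p_{j-1}q_j=1$ applied at indices $j=i$ and $j=i+k-1$, each factor equals $1$, so the determinant is $1$.

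I expect no serious obstacle here; the only real risk is a sign or indexing slip in the eight-term expansion. The natural safeguard is to group the terms immediately by which of the pairs $\{a,a'\},\{b,b'\},\{c,c'\},\{d,d'\}$ supplies the primed factor, which makes the factorization $(a'b-ab')(c'd-cd')$ transparent and guarantees that the two ``telescoping'' consecutive-neighbor determinants emerge cleanly.
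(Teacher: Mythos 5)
Your proposal is correct and is essentially the paper's own proof: the paper likewise expands the determinant in terms of the $p_j$ and $q_j$ and reduces it to the product $(p_iq_{i-1}-p_{i-1}q_i)(p_{i+k-1}q_{i+k-2}-p_{i+k-2}q_{i+k-1})=1$, exactly your factorization $(a'b-ab')(c'd-cd')$. Your grouping of the eight-term expansion is if anything a cleaner presentation of the same computation.
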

\begin{proof}
First of all we have that
\begin{align}
\nu_{k-1}(\gamma_i)\nu_{k-1}(\gamma_{i+1})=&(p_{i+k-2}q_{i-1}-p_{i-1}q_{i+k-2})(p_{i+k-1}q_i-p_iq_{i+k-1})\nonumber\\
=&p_{i+k-2}p_{i+k-1}q_{i-1}q_i+p_{i-1}p_iq_{i+k-2}q_{i+k-1}\label{id1eq1}\\
&-p_{i-1}p_{i+k-1}q_iq_{i+k-2}-p_ip_{i+k-2}q_{i-1}q_{i+k-1}.\nonumber
\end{align}
The sum of the first two terms on the right hand side of this
equation is
\begin{align}
p_{i+k-2}p_{i+k-1}q_{i-1}q_i&+p_{i-1}p_iq_{i+k-2}q_{i+k-1}\nonumber\\
=&p_{i+k-1}q_{i-1}(\nu_{k-2}(\gamma_{i+1})+p_iq_{i+k-2})\nonumber\\
&+p_{i-1}q_{i+k-1}(p_{i+k-2}q_i-\nu_{k-2}(\gamma_{i+1}))\nonumber\\
=&\nu_k(\gamma_i)\nu_{k-2}(\gamma_{i+1})\label{id1eq2}\\
&+p_ip_{i+k-1}q_{i-1}q_{i+k-2}+p_{i-1}p_{i+k-2}q_iq_{i+k-1}.\nonumber
\end{align}
By the determinant property of Farey fractions we also have that
\begin{align}
p_ip_{i+k-1}q_{i-1}q_{i+k-2}&+p_{i-1}p_{i+k-2}q_iq_{i+k-1}\nonumber\\
&-p_{i-1}p_{i+k-1}q_iq_{i+k-2}-p_ip_{i+k-2}q_{i-1}q_{i+k-1}\nonumber\\
=&(p_{i+k-1}q_{i+k-2}-p_{i+k-2}q_{i+k-1})(p_iq_{i-1}-p_{i-1}q_i)=1\label{id1eq3}
\end{align}
Combining (\ref{id1eq1}), (\ref{id1eq2}), and (\ref{id1eq3}) we find
that
\begin{equation}\label{id1eq4}
\nu_{k-1}(\gamma_i)\nu_{k-1}(\gamma_{i+1})-\nu_k(\gamma_i)\nu_{k-2}(\gamma_{i+1})=1,
\end{equation}
and this proves the lemma.
\end{proof}
Solving for $\nu_k(\gamma_i)$ in (\ref{id1eq4}) gives us the formula
\begin{equation}\label{id1eq5}
\nu_k(\gamma_i)=\frac{\nu_{k-1}(\gamma_i)\nu_{k-1}(\gamma_{i+1})-1}{\nu_{k-2}(\gamma_{i+1})}
\end{equation}
Induction now shows that $\nu_k(\gamma_i)$ is given by a rational function
evaluated at the integers $\{\nu_2(\gamma_j)\}_{j=i}^{i+k-2}$. To prove that this rational function is actually
a polynomial we will use the following
result.
\begin{lemma}\label{identity2}
For $k\ge 3$ and $\gamma_i\in\F_Q$ we have
\begin{equation}\label{id2eq1}
\nu_k(\gamma_i)=\nu_2(\gamma_{i+k-2})\nu_{k-1}(\gamma_i)-\nu_{k-2}(\gamma_i).
\end{equation}
\end{lemma}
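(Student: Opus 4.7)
The plan is to reduce the identity to a three-term recurrence satisfied by the numerators and denominators of consecutive Farey fractions, and then expand the defining expression for $\nu_k(\gamma_i)$ directly.

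First I would record the key auxiliary fact: for every $i$,
\begin{equation*}
q_{i+1}=\nu_2(\gamma_i)q_i-q_{i-1}\qquad\text{and}\qquad p_{i+1}=\nu_2(\gamma_i)p_i-p_{i-1}.
\end{equation*}
The first identity is immediate from the formula $\nu(\gamma_i)=(q_{i-1}+q_{i+1})/q_i$ in (\ref{nudef1}) combined with the equality $\nu=\nu_2$ established in the introduction. The second identity then follows by substituting $q_{i+1}=\nu_2(\gamma_i)q_i-q_{i-1}$ into the determinant relation $p_{i+1}q_i-p_iq_{i+1}=1$ and using $p_iq_{i-1}-p_{i-1}q_i=1$ to simplify; after dividing by $q_i$ (which is nonzero) one obtains the stated recurrence.

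Next I would apply this recurrence at the index $i+k-2$ to rewrite
\begin{equation*}
p_{i+k-1}=\nu_2(\gamma_{i+k-2})p_{i+k-2}-p_{i+k-3},\qquad q_{i+k-1}=\nu_2(\gamma_{i+k-2})q_{i+k-2}-q_{i+k-3},
\end{equation*}
and substitute into the definition $\nu_k(\gamma_i)=p_{i+k-1}q_{i-1}-p_{i-1}q_{i+k-1}$. Grouping the terms carrying the factor $\nu_2(\gamma_{i+k-2})$ against those that do not, the expression collapses to
\begin{equation*}
\nu_2(\gamma_{i+k-2})\bigl(p_{i+k-2}q_{i-1}-p_{i-1}q_{i+k-2}\bigr)-\bigl(p_{i+k-3}q_{i-1}-p_{i-1}q_{i+k-3}\bigr),
\end{equation*}
which is exactly $\nu_2(\gamma_{i+k-2})\nu_{k-1}(\gamma_i)-\nu_{k-2}(\gamma_i)$ by the definition of $\nu_{k-1}$ and $\nu_{k-2}$.

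There is essentially no obstacle here: the only real content is verifying the three-term recurrence for the $p_i$, which is a one-line consequence of the Farey determinant property once the recurrence for the $q_i$ is in hand. Everything else is bookkeeping. The identity is valid for $k\ge 3$ because the term $\nu_{k-2}(\gamma_i)$ requires $k-2\ge 1$; in the boundary case $k=3$ one uses $\nu_1(\gamma_i)=p_iq_{i-1}-p_{i-1}q_i=1$, which recovers the expected formula $\nu_3(\gamma_i)=\nu_2(\gamma_{i+1})\nu_2(\gamma_i)-1$ consistent with (\ref{K2form}) and Theorem \ref{algidthm}.
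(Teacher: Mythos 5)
Your proof is correct, and it takes a genuinely different route from the paper's. The paper proves the lemma by induction on $k$: it first establishes the determinant identity $\nu_{k-1}(\gamma_i)\nu_{k-1}(\gamma_{i+1})-\nu_k(\gamma_i)\nu_{k-2}(\gamma_{i+1})=1$ (Lemma \ref{detlemma}), solves it for $\nu_k(\gamma_i)$ to obtain the rational recursion (\ref{id1eq5}), and then runs an induction in which the hypothesis is applied at the shifted index $i+1$ and the leftover fraction is shown to have vanishing numerator by a second appeal to (\ref{id1eq4}). You instead observe that the numerators and denominators satisfy the three-term recurrence $p_{j+1}=\nu_2(\gamma_j)p_j-p_{j-1}$ and $q_{j+1}=\nu_2(\gamma_j)q_j-q_{j-1}$ (the $q$-recurrence is (\ref{nudef1}) rearranged, and the $p$-recurrence follows from the two adjacent determinant relations exactly as you describe, after cancelling the nonzero factor $q_j$), then substitute at $j=i+k-2$ into the definition of $\nu_k(\gamma_i)$; the identity falls out by bilinearity with no induction at all. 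Your argument is shorter and self-contained: it needs neither Lemma \ref{detlemma} nor any division by $\nu_{k-2}(\gamma_{i+1})$, and it makes transparent why the $\nu_k$ obey the same recursion as the convergent polynomials, which is the real content of Theorem \ref{algidthm}. What the paper's route buys is that the $SL_2(\Z)$ identity of Lemma \ref{detlemma} gets established along the way as a statement of independent interest; in your approach that identity would instead become a corollary of the present lemma rather than an ingredient of its proof. Your handling of the boundary case $k=3$ via $\nu_1\equiv 1$ is also correct.
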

\begin{proof}
Our proof is by induction on $k$. Setting $k=3$ in identity (\ref{id1eq5})
and using the fact that $\nu_1\equiv 1$ gives us
\begin{align*}
\nu_3(\gamma_i)=\frac{\nu_2(\gamma_i)\nu_2(\gamma_{i+1})-1}{\nu_1(\gamma_{i+1})}=\nu_2(\gamma_i)\nu_2(\gamma_{i+1})-\nu_1(\gamma_i).
\end{align*}
Now assume the truth of our identity for all integers $3\le j\le
k-1$. Then using (\ref{id1eq5}) together with the inductive
hypothesis we find that
\begin{align}
\nu_k(\gamma_i)+&\nu_{k-2}(\gamma_i)=\frac{\nu_{k-2}(\gamma_i)\nu_{k-2}(\gamma_{i+1})+\nu_{k-1}(\gamma_i)\nu_{k-1}(\gamma_{i+1})-1}{\nu_{k-2}(\gamma_{i+1})}\nonumber\\
=&\frac{\nu_{k-2}(\gamma_i)\nu_{k-2}(\gamma_{i+1})+\nu_{k-1}(\gamma_i)(\nu_2(\gamma_{i+k-2})\nu_{k-2}(\gamma_{i+1})-\nu_{k-3}(\gamma_{i+1}))-1}{\nu_{k-2}(\gamma_{i+1})}\nonumber\\
=&\nu_2(\gamma_{i+k-2})\nu_{k-1}(\gamma_i)+\frac{\nu_{k-2}(\gamma_i)\nu_{k-2}(\gamma_{i+1})-\nu_{k-1}(\gamma_i)\nu_{k-3}(\gamma_{i+1})-1}{\nu_{k-2}(\gamma_{i+1})}.\label{id2eq2}
\end{align}
Now substituting $k-1$ for $k$ in (\ref{id1eq4}) we find that the
numerator of the fraction in (\ref{id2eq2}) is equal to zero, and
this finishes the proof.
\end{proof}
With Lemma \ref{identity2} in hand we are ready to prove Theorem \ref{algidthm}.
\begin{proof}[Proof of Theorem \ref{algidthm}]
Again the proof is by induction on $k$. For $k=1$ the function on
the right hand side of (\ref{contiden}) is
\begin{equation*}
\left(\frac{1}{2}\right)K_0(\cdot)=1=\nu_1(\gamma_i),
\end{equation*}
and for $k=2$ it is
\begin{equation*}
\left(\frac{3}{2}\right)K_1(-\nu_2(\gamma_i))=\nu_2(\gamma_i).
\end{equation*}
Now assume that $k\ge 3$ and that our result is true for all
integers $1\le j\le k-1$. Then using (\ref{id2eq1}) gives us
\begin{align}
\nu_k(\gamma_i)=&\nu_2(\gamma_{i+k-2})\nu_{k-1}(\gamma_i)-\nu_{k-2}(\gamma_i)\nonumber\\
=&\left(\frac{2k-3}{2}\right)\nu_2(\gamma_{i+k-2})K_{k-2}\left(-\nu_2(\gamma_i),\ldots
,(-1)^{k-2}\nu_2(\gamma_{i+k-3})\right)\label{id3eq1}\\
&-\left(\frac{2k-5}{2}\right)K_{k-3}\left(-\nu_2(\gamma_i),\ldots
,(-1)^{k-3}\nu_2(\gamma_{i+k-4})\right).\nonumber
\end{align}
We also observe by (\ref{contrecur}) that
\begin{align}
&\left(\frac{2k-1}{2}\right)K_{k-1}\left(-\nu_2(\gamma_i),\ldots
,(-1)^{k-1}\nu_2(\gamma_{i+k-2})\right)\nonumber\\
=&\left(\frac{2k-1}{2}\right)\left((-1)^{k-1}\nu_2(\gamma_{i+k-2})K_{k-2}\left(-\nu_2(\gamma_i),\ldots
,(-1)^{k-2}\nu_2(\gamma_{i+k-3})\right)\right.\nonumber\\
&\left.\qquad\qquad\qquad+K_{k-3}\left(-\nu_2(\gamma_i),\ldots
,(-1)^{k-3}\nu_2(\gamma_{i+k-4})\right)\right)\nonumber\\
=&\left(\frac{2k-1}{2}\right)\left(\frac{4k-3}{2}\right)\nu_2(\gamma_{i+k-2})K_{k-2}\left(-\nu_2(\gamma_i),\ldots
,(-1)^{k-2}\nu_2(\gamma_{i+k-3})\right)\nonumber\\
&\qquad\qquad\qquad+\left(\frac{2k-1}{2}\right)K_{k-3}\left(-\nu_2(\gamma_i),\ldots
,(-1)^{k-3}\nu_2(\gamma_{i+k-4})\right)\nonumber\\
=&\left(\frac{2k-3}{2}\right)\nu_2(\gamma_{i+k-2})K_{k-2}\left(-\nu_2(\gamma_i),\ldots
,(-1)^{k-2}\nu_2(\gamma_{i+k-3})\right)\label{id3eq2}\\
&\qquad\qquad\qquad-\left(\frac{2k-5}{2}\right)K_{k-3}\left(-\nu_2(\gamma_i),\ldots
,(-1)^{k-3}\nu_2(\gamma_{i+k-4})\right),\nonumber
\end{align}
Note that here we have used the properties of the Kronecker symbol to write
\begin{align*}
\left(\frac{2k-1}{2}\right)\left(\frac{4k-3}{2}\right)=\left(\frac{8k^2-10k+3}{2}\right)=\left(\frac{2k-3}{2}\right)
\end{align*}
and
\begin{align*}
\left(\frac{2k-1}{2}\right)=-\left(\frac{2k-5}{2}\right).
\end{align*}
Finally, combining (\ref{id3eq1}) and (\ref{id3eq2}) establishes
(\ref{contiden}).
\end{proof}
To conclude this section we would like to point out that Theorem \ref{algidthm} together with (\ref{twistsum}) immediately yields a proof of Theorem \ref{asympthm} in the case when $k=3$. Indeed using (\ref{K2form}) and (\ref{twistsum}) we find that
\begin{align*}
\sum_{i=1}^{N(Q)}\nu_3(\gamma_i)&=\sum_{i=1}^{N(Q)}\left(\frac{5}{2}\right)K_2(-\nu_2(\gamma_i),\nu_2(\gamma_{i+1}))\\
&=\sum_{i=1}^{N(Q)}(\nu_2(\gamma_i)\nu_2(\gamma_{i+1})-1)\\
&=(A(1)-1)N(Q)+O(Q(\log Q)^2),
\end{align*}
and combining this with the observation that
\[N(Q)=\frac{3Q^2}{\pi^2}+O(Q(\log Q)^2)\]
finishes the argument.


\section{Proof of Theorem \ref{asympthm}}
Now we will show how the results of Section 2 can be used to prove Theorem \ref{asympthm}. The notation and many of the ideas in this section closely follow \cite{boca2001} and the proof of \cite[Theorem 1.1]{boca2002}.

We begin by defining the Farey triangle $\T\subseteq[0,1]^2$ by
\begin{equation*}
\T=\{(x,y)\in [0,1]^2:x+y>1\},
\end{equation*}
and we define the map $T:[0,1]^2\rar [0,1]^2$ by
\begin{equation*}
T(x,y)=\left(y,\left[\frac{1+x}{y}\right]y-x\right),
\end{equation*}
where $[x]$ denotes the greatest integer less than or equal to $x$. As shown in \cite{boca2001}, the map $T$ is a one-to-one area preserving transformation of $\T$ onto itself.
Now for each positive integer $k$ let
\begin{equation*}
\T_k=\left\{(x,y)\in\T:\left[\frac{1+x}{y}\right]=k\right\}.
\end{equation*}
Then the set $\T$ is the disjoint union of the sets $\T_k$ and we also have that
\begin{equation*}
T(x,y)=(y,ky-x)\quad\text{ for all }\quad(x,y)\in\T_k.
\end{equation*}
Also of importance to us are the maps $\kappa_i:\T\rar\Z^+$ defined for positive integers $i$ by
\begin{align*}
\kappa_1&=\left[\frac{1+x}{y}\right]\quad\text{ and}\\
\kappa_{i+1}&=\kappa_i\circ T(x,y)=\kappa_1\circ T^i(x,y).
\end{align*}
It is clear from the definition that
\begin{equation}\label{Tkform1}
\T_k=\{(x,y)\in\T:\kappa_1(x,y)=k\}.
\end{equation}
One useful property of the function $T$ is the fact
that for any integer $j$ we have
\begin{equation*}
T\left(\frac{q_{j-1}}{Q},\frac{q_j}{Q}\right)=\left(\frac{q_j}{Q},\frac{q_{j+1}}{Q}\right).
\end{equation*}
Using this fact we find that for any non-negative integer $i$
\begin{align}
\kappa_{i+1}\left(\frac{q_{j-1}}{Q},\frac{q_j}{Q}\right)&=\kappa_1\circ
T^i\left(\frac{q_{j-1}}{Q},\frac{q_j}{Q}\right)\nonumber\\
&=\kappa_1\left(\frac{q_{j+i-1}}{Q},\frac{q_{j+i}}{Q}\right)\label{kform1}\\
&=\left[\frac{Q+q_{j+i-1}}{q_{j+i}}\right]\nonumber\\
&=\nu_2(\gamma_{j+i}).\nonumber
\end{align}
For ease of proof it will also be convenient to define for each positive integer $k$ a set
\begin{equation*}
\T_k^*=\bigcup_{\ell=k}^\infty \T_\ell.
\end{equation*}
It is easy to verify that
\begin{align}\label{arlenTk}
\ar \T_k^*=\frac{2}{k(k+1)}\quad\text{ and }\quad\len \T_k^*\ll \frac{1}{k}.
\end{align}
Furthermore from \cite[Corollary 2.5 and Remark 2.6]{boca2002} we have the following result.
\begin{lemma}\label{tkintlemma}
If $h,k,$ and $\ell$ are any positive integers with $\min (k,\ell)>c_h=4h+2$ then \[\T_k^*\cap T^{-h}\T_\ell^*=\emptyset.\]
\end{lemma}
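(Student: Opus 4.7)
The plan is to translate $(x,y)\in\T_k^*\cap T^{-h}\T_\ell^*$ into a statement about a finite $T$-orbit, then derive a contradiction with $\min(k,\ell)>4h+2$ from a linear propagation estimate for the induced recurrence. Set $y_0=x$, $y_1=y$, and $y_{i+1}=a_iy_i-y_{i-1}$ where $a_i=\kappa_i(x,y)=\lfloor(1+y_{i-1})/y_i\rfloor\ge 1$; then $T^i(x,y)=(y_i,y_{i+1})$, the hypothesis becomes $a_1\ge k$ and $a_{h+1}\ge\ell$, and the bound $a_iy_i\le 1+y_{i-1}\le 2$ at the two endpoints immediately yields $y_1\le 2/k$ and $y_{h+1}\le 2/\ell$. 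Throughout we also have $y_i\in(0,1]$ and $y_{i-1}+y_i>1$, since $T$ preserves $\T$.

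Two local facts will drive the remainder. First (alternation): $y_i\le\rho\le 1/2$ forces $y_{i\pm 1}>1-\rho$, so in particular $y_2>1-2/k$ and $y_h>1-2/\ell$; small $y_i$ cannot sit next to each other. Second (partial-quotient rigidity): the formula $a_{i+1}=\lfloor(1+y_{i-1})/y_i\rfloor$ shows that once two consecutive $y_i$ are close to $1$, the next partial quotient is pinned into $\{1,2\}$, and the further constraint $y_{i+1}\in[0,1]$ with $y_i+y_{i+1}>1$ typically forces a unique value, yielding an explicit linear relation between consecutive $y_i$. Iterating this, I would show by induction on $i$ that in the bulk the deficit $1-y_i$ is bounded above by an additive $O(i/k)$ quantity, so that the $y_i$ stay close to $1$ for as long as the propagation from $y_1$ can carry them.

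Running the same argument backward from $y_{h+1}\le 2/\ell$ produces a matching bound $1-y_i\le O((h+1-i)/\ell)$. Combining the forward and backward estimates at an interior index forces the transition from the bulk regime $y_i\approx 1$ back down to $y_{h+1}\le 2/\ell$ to consume at least roughly $\min(k,\ell)/4$ further steps, so that $h\ge(\min(k,\ell)-2)/4$, i.e.\ $\min(k,\ell)\le 4h+2$. The main obstacle is precisely the step-by-step bookkeeping of constants: at each index one needs a small case analysis on $a_i$ and careful propagation of both upper and lower bounds on $y_i$ in the correct units of $1/k$ and $1/\ell$, so that the linear loss of ``closeness to $1$'' aggregates with the right coefficient. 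This is the argument executed in \cite[Corollary 2.5 and Remark 2.6]{boca2002}.
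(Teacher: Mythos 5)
The paper offers no proof of this lemma at all: it is imported verbatim from \cite{boca2002} (Corollary 2.5 and Remark 2.6), so there is no internal argument to compare against. Your sketch reconstructs essentially the argument of that reference, and the plan is sound: the setup is correct ($T^i(x,y)=(y_i,y_{i+1})\in\T$ gives $y_i\in(0,1]$ and $y_i+y_{i+1}>1$, and the hypotheses translate to $y_1\le 2/k$ and $y_{h+1}\le 2/\ell$), and the alternation observation is right. The one place where you hedge --- ``typically forces a unique value'' --- does close up, and it is worth recording how, since this is exactly where the constant $c_h=4h+2$ enters. From $y_1\le 2/k<1/3$ and $y_2>1-y_1$ one gets $a_2=\lfloor(1+y_1)/y_2\rfloor=1$, hence $y_3=y_2-y_1$; inductively, if $y_{j-1}=y_j+y_1$ and $y_j>(1+y_1)/2$, then $(1+y_{j-1})/y_j=1+(1+y_1)/y_j\in[2,3)$, so $a_j=2$ and $y_{j+1}=y_j-y_1$ (the value $a_j=1$ would give $y_{j+1}=-y_1<0$). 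Thus $y_{j+1}=y_2-(j-1)y_1$ for as long as the iterates stay above $(1+y_1)/2$, and the hypothesis $k>4h+2$, i.e.\ $y_1\le 2/k<1/(2h+1)$, is precisely what keeps the induction alive through $j=h$; it yields $y_{h+1}>1-hy_1>(h+1)/(2h+1)>1/2$, contradicting $y_{h+1}\le 2/\ell<1/(2h+1)\le 1/3$. Two small corrections to your last paragraph: the forward propagation alone suffices (the backward propagation from $y_{h+1}$ and the ``combination at an interior index'' are unnecessary), and the step-counting heuristic ``descending from $1$ to $2/\ell$ costs $\min(k,\ell)/4$ steps'' is not quite legitimate as stated, because the forced linear drift is only guaranteed while $y_j$ stays above roughly $1/2$; the correct conclusion is that $h$ steps cannot even bring $y$ below $1/2$, which already contradicts $y_{h+1}\le 2/\ell$.
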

Finally we point out (see also \cite[Remark 2.3]{boca2002}) that the matrices for $T$ as linear transformations on each of the sets $\T_k$ are elements of $SL_2(\Z)$. Setting
 \begin{equation*}
\Zv=\{(a,b)\in\Z^2:(a,b)=1\},
\end{equation*}
this implies that \[\#\{Q\Omega\cap\Zv\}=\#\{Q(T\Omega)\cap\Zv\}\]
for any $\Omega\subseteq\T.$

\begin{proof}[Proof of Theorem \ref{asympthm}]
First let us consider the case when $k=4$. By Theorem \ref{algidthm}
together with formula (\ref{K3form}) we have that
\begin{align}
\sum_{i=1}^{N(Q)}\nu_4(\gamma_i)&= \sum_{i=1}^{N(Q)}\left(\frac{7}{2}\right)K_3(-\nu_2(\gamma_i),\nu_2(\gamma_{i+1}),-\nu_2(\gamma_{i+2}))\nonumber\\
&=\sum_{i=1}^{N(Q)}\nu_2(\gamma_i)\nu_2(\gamma_{i+1})\nu_2(\gamma_{i+2})-\sum_{i=1}^{N(Q)}\nu_2(\gamma_i)-\sum_{i=1}^{N(Q)}\nu_2(\gamma_{i+2})\label{nu4sum1}\\
&=R_1(Q)-R_2(Q)-R_3(Q).\label{nu4sum2}
\end{align}
Appealing to (\ref{indsum1}) and to the periodicity of $\nu_2$ we have that
\begin{equation}
R_2(Q)=R_3(Q)=3N(Q)-1.
\end{equation}
To evaluate $R_1(Q)$ we will use the following well known fact about Farey fractions.
\begin{lemma}
Let $a,b,$ and $Q$ be positive integers. Then there is an integer $1\le i\le N(Q)$ for which $q_i=a$ and $q_{i+1}=b$ if and only if
\begin{equation*}
1\le a,b\le Q,\quad (a,b)=1,\quad\text{and}\quad a+b>Q.
\end{equation*}
Furthermore when these conditions on $a$ and $b$ are satisfied then the integer $i$ is uniquely determined.
\end{lemma}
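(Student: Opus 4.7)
The plan is to prove both directions of the equivalence and then dispatch uniqueness, using only the basic determinant identity for consecutive Farey fractions and the mediant construction.

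For the forward direction, suppose $q_i=a$ and $q_{i+1}=b$. Then $a,b\le Q$ by definition of $\F_Q$, and the identity $p_{i+1}q_i-p_iq_{i+1}=1$ recalled at the start of Section~1 forces $(a,b)=1$. To obtain $a+b>Q$, I would argue by contradiction: if $a+b\le Q$, the mediant
\[
\frac{p_i+p_{i+1}}{q_i+q_{i+1}}=\frac{p_i+p_{i+1}}{a+b}
\]
is in lowest terms (any common factor would divide $p_{i+1}q_i-p_iq_{i+1}=1$), has denominator at most $Q$, and sits strictly between $\gamma_i$ and $\gamma_{i+1}$, contradicting that these two fractions are consecutive in $\F_Q$.

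For the reverse direction, given $a,b$ satisfying the stated hypotheses, I would use Bezout's identity to produce a unique pair of integers $p,p'$ with $0<p\le a$, $0<p'\le b$, and $p'a-pb=1$. Both fractions $p/a$ and $p'/b$ then lie in $\F_Q$. The key step is verifying that they are consecutive. If some $r/s\in\F_Q$ satisfied $p/a<r/s<p'/b$, then $ra-ps\ge 1$ and $p's-rb\ge 1$ as positive integers, so
\[
s = s(p'a-pb) = a(p's-rb) + b(ra-ps) \ge a+b > Q,
\]
contradicting $s\le Q$. Hence $p/a=\gamma_i$ and $p'/b=\gamma_{i+1}$ for some $i$.

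Uniqueness is then essentially free: the number $p$ is determined by the congruence $pb\equiv -1\pmod{a}$ together with the range $0<p\le a$, and similarly $p'$ is determined by $p'a\equiv 1\pmod{b}$ with $0<p'\le b$; so the pair of consecutive fractions $(p/a,p'/b)$ is unique, and because $\gamma_1,\ldots,\gamma_{N(Q)}$ are pairwise distinct, the index $i\in\{1,\ldots,N(Q)\}$ is uniquely determined. I do not see a substantive obstacle here; the only step requiring care is the mediant/contradiction argument above, which is standard but is the single place where the hypothesis $a+b>Q$ enters nontrivially.
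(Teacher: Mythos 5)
The paper itself gives no proof of this lemma; it is quoted as a ``well known fact'' (it is the classical characterization of neighbouring denominators in a Farey sequence, as in Hardy and Wright), so the only thing to compare your argument against is the standard one, which is essentially what you have written. The two main steps are sound: the mediant argument correctly shows that consecutive denominators satisfy $a+b>Q$ (and your observation that a common factor of $p_i+p_{i+1}$ and $q_i+q_{i+1}$ divides $q_i(p_i+p_{i+1})-p_i(q_i+q_{i+1})=1$ is right), and the identity $s=s(p'a-pb)=a(p's-rb)+b(ra-ps)\ge a+b>Q$ correctly rules out any intermediate fraction of order $Q$ in the reverse direction.

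There is, however, one boundary case where your construction breaks, caused by the paper's convention of extending the sequence periodically via $\gamma_{i+N(Q)}=\gamma_i+1$: the pair $(a,b)=(1,Q)$ satisfies all three hypotheses and must correspond to $i=N(Q)$, where $\gamma_{N(Q)}=1/1$ and $\gamma_{N(Q)+1}=(Q+1)/Q$. Here your normalization is unachievable: $0<p\le a=1$ forces $p=1$, and then $p'a-pb=1$ gives $p'=Q+1>b$, so there is no pair with $0<p'\le b$, and the relevant fraction $(Q+1)/Q$ lies in the extended sequence but not in $\F_Q$, contrary to your assertion that both fractions lie in $\F_Q$. One checks that $a=1$ together with $a+b>Q$ forces $b=Q$, so this is the only pair affected; for $a\ge 2$ the residue $p\in\{1,\ldots,a\}$ with $pb\equiv -1\pmod{a}$ automatically satisfies $p\le a-1$, whence $p'=(1+pb)/a\le b$, and your argument goes through verbatim. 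The fix is a one-line addendum exhibiting $i=N(Q)$ for the pair $(1,Q)$, but as written the reverse implication and the uniqueness claim both silently exclude this case.
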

Using (\ref{kform1}) together with this lemma we find that
\begin{align}
R_1(Q)&=\sum_{i=1}^{N(Q)}\kappa_1\left(\frac{q_{i-1}}{Q},\frac{q_i}{Q}\right)\kappa_2\left(\frac{q_{i-1}}{Q},\frac{q_i}{Q}\right)\kappa_3\left(\frac{q_{i-1}}{Q},\frac{q_i}{Q}\right)\nonumber\\
&=\sum_{(a,b)\in Q\T\cap\Zv}\kappa_1\left(\frac{a}{Q},\frac{b}{Q}\right)\kappa_2\left(\frac{a}{Q},\frac{b}{Q}\right)\kappa_3\left(\frac{a}{Q},\frac{b}{Q}\right).\label{kform2}
\end{align}
Now using (\ref{Tkform1}) and the fact that $Q\T\cap\Zv$ can be written as the disjoint union
\begin{equation*}
Q\T\cap\Zv=\bigcup_{k=1}^\infty(Q\T_k\cap\Zv)
\end{equation*}
we find that the right hand side of (\ref{kform2}) is equal to
\begin{align}
\sum_{k_1=1}^\infty\sum_{(a,b)\in Q\T_{k_1}\cap\Zv}&\kappa_1\left(\frac{a}{Q},\frac{b}{Q}\right)\kappa_2\left(\frac{a}{Q},\frac{b}{Q}\right)\kappa_3\left(\frac{a}{Q},\frac{b}{Q}\right)\nonumber\\
&=\sum_{k_1=1}^\infty k_1\sum_{(a,b)\in Q\T_{k_1}\cap\Zv}\kappa_2\left(\frac{a}{Q},\frac{b}{Q}\right)\kappa_3\left(\frac{a}{Q},\frac{b}{Q}\right)\nonumber\\
&=\sum_{k_1,k_2=1}^\infty k_1k_2\sum_{(a,b)\in Q(\T_{k_1}\cap T^{-1}\T_{k_2})\cap\Zv}\kappa_3\left(\frac{a}{Q},\frac{b}{Q}\right)\nonumber\\
&=\sum_{k_1,k_2,k_3=1}^\infty k_1k_2k_3\#\left\{Q(\T_{k_1}\cap T^{-1}\T_{k_2}\cap T^{-2}\T_{k_3})\cap\Zv\right\}.\nonumber
\end{align}
By interchanging the order of summation we find that the last sum is equal to
\begin{align}
&\sum_{\ell_1,\ell_2,\ell_3=1}^\infty\sum_{k_1=\ell_1}^\infty\sum_{k_2=\ell_2}^\infty\sum_{k_3=\ell_3}^\infty\#\left\{Q(\T_{k_1}\cap T^{-1}\T_{k_2}\cap T^{-2}\T_{k_3})\cap\Zv\right\}\nonumber\\
&\qquad\qquad\qquad=\sum_{\ell_1,\ell_2,\ell_3=1}^\infty\#\left\{Q(\T_{\ell_1}^*\cap T^{-1}\T_{\ell_2}^*\cap T^{-2}\T_{\ell_3}^*)\cap\Zv\right\}.\label{kform3}
\end{align}
Now we write
\begin{equation*}
A_{\ell_1,\ell_2,\ell_3}(Q)=\#\left\{Q(\T_{\ell_1}^*\cap T^{-1}\T_{\ell_2}^*\cap T^{-2}\T_{\ell_3}^*)\cap\Zv\right\}
\end{equation*}
and we split up the sum in (\ref{kform3}) as \[\sum_{i=0}^3S_i(Q)\] with
\begin{align*}
S_0(Q)&=\sum_{\ell_1,\ell_2,\ell_3=1}^{c_2}A_{\ell_1,\ell_2,\ell_3}(Q)\\
\intertext{and}
S_i(Q)&=\sum_{\ell_i=c_2+1}^{2Q}\sum_{\substack{j=1\\j\not= i}}^3\sum_{\ell_j=1}^{c_2}A_{\ell_1,\ell_2,\ell_3}(Q)~\text{ for } 1\le i\le 3.
\end{align*}
Note that here we are using Lemma \ref{tkintlemma} and the fact that $Q\T_k\cap\Zv=\emptyset$ for $k>2Q.$ Now we estimate each of these sums using the following classical result which we quote from \cite[Corollary 2.2]{boca2002}.
\begin{lemma}
If $\Omega\subseteq [0,R_1]\times[0,R_2]$ is a bounded region with rectifiable boundary and $R\ge \min (R_1,R_2)$ then
\[\#(\Omega\cap\Zv)=\frac{6\ar (\Omega)}{\pi^2}+O\left(R+\len (\partial\Omega)\log R+\frac{\ar (\Omega)}{R}\right).\]
\end{lemma}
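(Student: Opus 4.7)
The plan is to deduce this primitive (``visible'') lattice-point estimate from the corresponding count for \emph{all} integer lattice points by Möbius inversion, which is the standard route for problems of this type.

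First I would apply the identity $\mathbf{1}_{\gcd(a,b)=1}=\sum_{d\mid\gcd(a,b)}\mu(d)$ to each $(a,b)\in\Omega\cap\Z^2$ other than the origin (which is never primitive under the convention $(0,0)\notin\Zv$), obtaining
\[\#(\Omega\cap\Zv)=\sum_{d=1}^{\infty}\mu(d)\,\#\bigl(d^{-1}\Omega\cap(\Z^2\setminus\{(0,0)\})\bigr).\]
This sum is effectively finite, since $d^{-1}\Omega$ contains no nonzero integer point once $d>\mx(R_1,R_2)$.

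Next I would invoke the classical estimate for a planar set $S$ with rectifiable boundary, $\#(S\cap\Z^2)=\ar(S)+O(\len(\partial S)+1)$, applied to $S=d^{-1}\Omega$; this yields the per-$d$ value $\ar(\Omega)/d^2+O(\len(\partial\Omega)/d+1)$. Truncating the Möbius sum at $d=R$ and summing the main terms over all $d\ge 1$ gives $\ar(\Omega)/\zeta(2)=6\ar(\Omega)/\pi^2$, with tail error $O(\ar(\Omega)/R)$. The per-$d$ error terms for $d\le R$ contribute $O\bigl(\len(\partial\Omega)\log R+R\bigr)$, which is already of the shape claimed.

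What remains is the exact-count tail $\sum_{d>R}\mu(d)\,\#\bigl(d^{-1}\Omega\cap\Z^2\bigr)$, and this is where the hypothesis $R\ge\min(R_1,R_2)$ enters. Assuming $R_2\le R_1$, for $d>R\ge R_2$ the rescaled region $d^{-1}\Omega$ lies in a horizontal strip of height strictly less than one, so its nonzero integer points must lie on the $x$-axis, and a careful bookkeeping keeps their total contribution inside $O\bigl(\ar(\Omega)/R+\len(\partial\Omega)\log R\bigr)$. The main obstacle I would expect is precisely this tail: in the worst case of a very elongated $\Omega$ with $R$ only just above $\min(R_1,R_2)$, one needs to control the length of the cross-section $\Omega\cap(\R\times\{0\})$ in terms of $\len(\partial\Omega)$ and $\ar(\Omega)$ in order to avoid losing a spurious factor of $\log(R_1/R)$ beyond the $\len(\partial\Omega)\log R$ already present.
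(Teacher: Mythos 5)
You should first be aware that the paper does not prove this lemma: it is quoted verbatim from \cite[Corollary 2.2]{boca2002} as a ``classical result,'' so there is no in-paper argument to compare against. Your skeleton --- M\"obius inversion over $d\mid\gcd(a,b)$, the area-plus-perimeter estimate $\#(d^{-1}\Omega\cap\Z^2)=\ar(\Omega)/d^2+O(\len(\partial\Omega)/d+1)$ for each dilate, and truncation at $d\le R$ yielding the main term $6\ar(\Omega)/\pi^2$ together with the errors $O(\ar(\Omega)/R+\len(\partial\Omega)\log R+R)$ --- is the standard route and, as far as the head of the sum is concerned, is correct.

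The genuine gap is exactly where you flag it: the tail over $d>R$, and the ``careful bookkeeping'' you defer is not routine. Taking $R_2\le R_1$ and writing $E=\{m\in\Z_{\ge 1}:(m,0)\in\Omega\}$, the tail is bounded in absolute value by $\sum_{d>R}\#\{n\ge1:dn\in E\}=\sum_{m\in E}\#\{d\mid m:\ d>R\}$, a divisor-type sum which for a long thin region along the $x$-axis (say $\Omega=[0,R_1]\times[0,R_2]$ with $R=R_2$) has order $R_1\log(R_1/R)$; this is \emph{not} dominated by $R+\len(\partial\Omega)\log R+\ar(\Omega)/R$ once $R_1$ is much larger than a fixed power of $R$, even though the lemma's conclusion remains true there. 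Closing the gap along your route requires either genuine cancellation in $\sum_{d\mid m,\,d>R}\mu(d)$ or a geometric input relating $|E|$ to $\len(\partial\Omega)$; neither is supplied. The cleaner repair is to discard the axes \emph{before} inverting: the only points of $\Zv$ with a zero coordinate are $(\pm1,0)$ and $(0,\pm1)$, an $O(1)$ contribution, and for the remaining points, with both coordinates at least $1$, the identity $\mathbf{1}_{\gcd(a,b)=1}=\sum_{d\mid\gcd(a,b)}\mu(d)$ produces a sum that terminates automatically at $d\le\min(R_1,R_2)\le R$, so there is no tail at all. One must then still account for axis lattice points of the dilates $d^{-1}\Omega$ when invoking the area-plus-perimeter count, but for the regions to which the paper actually applies the lemma --- subsets of $Q\T\subseteq\{x>0,\ y>0,\ x+y>Q\}$, none of whose dilates contains an integer point on either axis --- every one of these complications is vacuous. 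A minor further caveat: the estimate $\#(S\cap\Z^2)=\ar(S)+O(\len(\partial S)+1)$ carries an implicit dependence on the number of connected components of $S$, harmless for the finite unions of polygons arising in Section 3 but worth acknowledging if you insist on arbitrary ``regions.''
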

By appealing to this lemma we find that
\begin{align*}
S_0(Q)&=\frac{6Q^2}{\pi^2}\sum_{\ell_1,\ell_2,\ell_3=1}^{c_2}\ar (\T_{\ell_1}^*\cap T^{-1}\T_{\ell_2}^*\cap T^{-2}\T_{\ell_3}^*)\\
&\qquad\qquad+O\left(Q\log Q\sum_{\ell_1,\ell_2,\ell_3=1}^{c_2}\len \left(\partial(\T_{\ell_1}^*\cap T^{-1}\T_{\ell_2}^*\cap T^{-2}\T_{\ell_3}^*)\right)\right)\\
&=\frac{6Q^2}{\pi^2}\sum_{\ell_1,\ell_2,\ell_3=1}^{c_2}\ar (\T_{\ell_1}^*\cap T^{-1}\T_{\ell_2}^*\cap T^{-2}\T_{\ell_3}^*)+O(Q\log Q)
\end{align*}
and that
\begin{align}
S_1(Q)&=\frac{6Q^2}{\pi^2}\sum_{\ell_1=c_2+1}^{2Q}\sum_{\ell_2,\ell_3=1}^{c_2}\ar (\T_{\ell_1}^*\cap T^{-1}\T_{\ell_2}^*\cap T^{-2}\T_{\ell_3}^*)\label{s1form1}\\
&\qquad+O\left(Q\log Q\sum_{\ell_1=c_2+1}^{2Q}\sum_{\ell_2,\ell_3=1}^{c_2}\len \left(\partial(\T_{\ell_1}^*\cap T^{-1}\T_{\ell_2}^*\cap T^{-2}\T_{\ell_3}^*)\right)\right).\nonumber
\end{align}
To estimate the error term in our formula for $S_1(Q)$ we use (\ref{arlenTk}) and observe that
\begin{align*}
&\len \left(\partial(\T_{\ell_1}^*\cap T^{-1}\T_{\ell_2}^*\cap T^{-2}\T_{\ell_3}^*)\right)\\
&\qquad\qquad\le\sum_{k_2=\ell_2}^{c_2}\sum_{k_3=\ell_3}^{c_2}\len \left(\partial(\T_{\ell_1}^*\cap T^{-1}\T_{k_2}\cap T^{-2}\T_{k_3})\right)\\
&\qquad\qquad\ll\frac{1}{\ell_1}.
\end{align*}
Furthermore extending the first sum on $\ell_1$ in (\ref{s1form1}) to infinity introduces an overall error of at most $O(Q)$. Thus we have that
\begin{align*}
S_1(Q)=\frac{6Q^2}{\pi^2}\sum_{\ell_1=c_2+1}^{\infty}\sum_{\ell_2,\ell_3=1}^{c_2}\ar (\T_{\ell_1}^*\cap T^{-1}\T_{\ell_2}^*\cap T^{-2}\T_{\ell_3}^*)+O\left(Q(\log Q)^2\right).
\end{align*}
Estimates for $S_2(Q)$ and $S_3(Q)$ are obtained in the same way and substituting everything back into (\ref{nu4sum2}) we have that
\begin{align*}
\sum_{i=1}^{N(Q)}\nu_4(\gamma_i)=B(4)N(Q)+O(Q(\log Q)^2)
\end{align*}
with
\begin{align*}
B(4)=2\sum_{\ell_1,\ell_2,\ell_3=1}^{\infty}\ar (\T_{\ell_1}^*\cap T^{-1}\T_{\ell_2}^*\cap T^{-2}\T_{\ell_3}^*)-6.
\end{align*}
\emph{It is important to note that the constant implied in the error term here depends at most on the quantity $c_2$.}

The proof for $k>4$ is virtually the same. We write the $k$th convergent polynomial as a sum of monomials
\begin{align*}
K_k(x_1,x_2,\ldots ,x_k)=\sum_{m=1}^{F_{k-1}}x_{j_{m,1}}x_{j_{m,2}}\cdots x_{j_{m,n_m}},
\end{align*}
where $F_{k-1}$ is the $(k-1)$st Fibonacci number and $j_{m,\ell}\in\{1,\ldots ,k\}$ for each $1\le m\le F_{k-1}$ and $1\le \ell\le n_m$. We also treat the empty product as being equal to $1$ and if $k$ is odd then we accommodate the constant term in $K_k$ by setting $n_{F_{k-1}}=0$. Then for each $m$ we define
\[R_m(Q)=\sum_{i=1}^{N(Q)}\nu_2(\gamma_{i+j_{m,1}-1})\nu_2(\gamma_{i+j_{m,2}-1})\cdots\nu_2(\gamma_{i+j_{m,n_m}-1}).\]
We may evaluate each of these sums as we did in the case when $k=4$, the only difference being that for each sum we would need to apply Lemma \ref{tkintlemma} with $h$ possibly as large as $k-1$. Thus in the end we obtain
\begin{align*}
\sum_{i=1}^{N(Q)}\nu_k(\gamma_i)=B(k)N(Q)+O_k(Q(\log Q)^2)
\end{align*}
with
\begin{align}\label{Bkval}
B(k)=2\sum_{m=1}^{F_{k-1}}\pm\sum_{i=1}^{n_m}\sum_{\ell_i=1}^{\infty}\ar (T^{1-j_{m,1}}\T_{\ell_1}^*\cap T^{1-j_{m,2}}\T_{\ell_2}^*\cap\cdots\cap T^{1-j_{m,n_m}}\T_{\ell_{n_m}}^*).
\end{align}
\end{proof}


\section{Closing remarks}
First of all we remark that for any $k\ge 2$ the index $\nu_k(\gamma_i)$ will take all positive integer values as $i$ and $Q$ vary. We leave this for the reader to verify. The frequency with which a given $k-$index takes a particular value can be determined by computing all $(k-1)-$tuples of positive integers which are solutions to (\ref{contiden}), and then computing the areas of the corresponding subregions of the Farey triangle. Of course many of these subregions will be empty, and it is not obvious whether or not there is a nice formula for the frequencies in general.

A related problem is that of determining good bounds for the constants $B(k)$ which appear in Theorem \ref{asympthm}. A trivial bound for $B(k)$ can be obtained by a straightforward application of Lemma \ref{tkintlemma} to equation (\ref{Bkval}). At the very least this gives
\[B(k)\ll F_{k-1}c_k^k,\]
but this is probably far from best possible. The investigation of this problem and of the best estimate for $A(h)$ seem to be related to the determination of the dynamical properties of the map $T$. Indeed a proof that $T$ is strongly mixing would likely also imply that $A(h)\ll 1$ as $h\rar\infty$. However the ergodic properties of this map are at this time unknown.



\end{document}